\font\petite=cmmi10 at 8pt  
\def\dom{\mathop{\mathrm{Dom}}\nolimits}
\def\im{\mathop{\mathrm{Im}}\nolimits}
\def\reg{\mathop{\mathrm{Reg}}\nolimits}
\def\pv#1{\ensuremath{{\sf#1}}}
\def\pvid#1{{\llbracket#1\rrbracket}}
\def\malcev{\mathbin{\hbox{$\bigcirc$\rlap{\kern-9pt\raise0,75pt\hbox{\petite m}}}}}
\newcommand{\I}{\hbox{$\mathcal I$}}
\def\relR{\mathop{\mathscr R}}
\def\relL{\mathop{\mathscr L}}
\def\relD{\mathop{\mathscr D}}
\def\relJ{\mathop{\mathscr J}}
\def\relI{\mathop{\mathscr I}}
\newcommand{\inv}[1]{{#1}^{-1}} %% old sintax was '\*'%% 
\newtheorem{theorem}{Theorem}%[section]
\newtheorem{proposition}[theorem]{Proposition}
\newtheorem{lemma}[theorem]{Lemma}
\title{The Vagner-Preston representation of a block-group}
\author{V.H. Fernandes\footnote{
This work is funded by national funds through the FCT - Funda\c c\~ao para a Ci\^encia e a Tecnologia, I.P., under the scope of the project UIDB/00297/2020 (Center for Mathematics and Applications).}}
\newcommand{\lastpage}{\addresss}
\newcommand{\addresss}{\small \sf  
\noindent{\sc V\'\i tor H. Fernandes}, 
CMA, Departamento de Matem\'atica, 
Faculdade de Ci\^encias e Tecnologia, 
Universidade NOVA de Lisboa, 
Monte da Caparica, 
2829-516 Caparica, 
Portugal; 
e-mail: vhf@fct.unl.pt. 
}
\begin{document}

\maketitle \vspace*{-1.5cm}

\renewcommand{\thefootnote}{}

\footnote{2010 \emph{Mathematics Subject Classification}: 20M07, 20M10, 20M18, 20M20, 20M30.}

\footnote{\emph{Key words}: block-groups, representations.}

\renewcommand{\thefootnote}{\arabic{footnote}}
\setcounter{footnote}{0}

\begin{abstract}
In this short note we construct an extension of 
the Vagner-Preston representation for block-groups and show that its kernel is the largest congruence that separates regular elements.
\end{abstract}

\subsection*{Introduction} 

A well-known result due to Vagner and Preston states that every inverse
semigroup $S$ has a faithful representation into $\I(S)$, the symmetric inverse semigroup on $S$. In fact, the mapping 
$$
\begin{array}{rccccccc}
\phi: & S & \rightarrow & \I(S)&&&\\
        & s & \mapsto & \phi_s:&Sss^{-1}&\rightarrow&Ss^{-1}s\\
         &&&                    & x & \mapsto &xs
\end{array}
$$
is an injective homomorphism of semigroups. 
Another classical representation of an inverse semigroup is the {\it Munn
representation}: for every inverse semigroup $S$,
denoting by $E$ the semilattice of all the idempotents of $S$, 
a homomorphism from $S$ into $\I(E)$ is defined by 
$$
\begin{array}{rccccccc}
\delta: & S & \rightarrow & \I(E)&&&\\
        & s & \mapsto & \delta_s:&Ess^{-1}&\rightarrow&Es^{-1}s\\
         &&&                    & e & \mapsto &s^{-1}es.
\end{array}
$$
Notice that, in general,
the Munn representation of an inverse semigroup is not injective.
Indeed, the kernel of $\delta$ is the largest 
idempotent-separating congruence on $S$. Therefore, $\delta$ is an
injective homomorphism if and only if $S$ is a fundamental semigroup
(see \cite{Howie:1995} or \cite{Petrich:1984}, for more details).

In this work, for finite semigroups, our goal is to extend the Vagner-Preston representation to semigroups 
whose elements have at most one inverse.  
The finite semigroups with this property form the 
well known class $\pv{BG}$ of semigroups called {\it block-groups}. 
This class constitutes a pseudovariety of semigroups 
(i.e. a class of 
finite semigroups closed under homomorphic images of 
subsemigroups and finitary direct products) 
and plays a main role in 
the following celebrated result: 
$$\diamondsuit\pv{G}=\pv{PG}=\pv{J*G}=\pv{J\malcev G}=\pv{BG}=\pv{EJ},
$$
where $*$ and $\malcev$ denote respectively the semidirect product and the Mal'cev product of pseudovarieties, 
$\pv{G}$ and $\pv{J}$ denote the pseudovarieties of all  
groups and of all $\relJ$-trivial semigroups, respectively, 
$\pv{PG}$ and $\diamondsuit\pv{G}$ denote the pseudovarieties 
generated by all power monoids of groups and by all 
Sch\"utzenberger products of groups, respectively,   
and, finally, $\pv{EJ}$ denotes the pseudovariety of all 
semigroups whose idempotents generate a $\relJ$-trivial semigroup. 
See \cite{Pin:1995} for precise definitions and 
for a complete story of these equalities.  
In \cite{Fernandes:2008SF} we extended the Munn representation for block-groups and, in \cite{Fernandes:2008}, by using this extension, 
we add two other equalities to the above series by showing that 
$$
\pv{BG}=\pv{EI}\malcev\pv{Ecom}=\pv{N}\malcev\pv{Ecom}, 
$$ 
where $\pv{EI}$, $\pv{N}$ and $\pv{Ecom}$ denote the pseudovarieties of 
all semigroups with just one idempotent, of 
all nilpotent semigroups and of all semigroups with commuting idempotents, respectively. 

\medskip 

We assume some knowledge on semigroups, namely on Green's relations, regular elements and
inverse semigroups. Possible references are \cite{Howie:1995,Petrich:1984}. 
For general background on pseudovarieties, pseudoidentities and other stuff on finite semigroups, 
we refer the reader to Almeida's book \cite{Almeida:1995}. 

\smallskip 

All semigroups considered in this paper are finite. 

\subsection*{Extending the Vagner-Preston representation to block-groups}\label{sec1}

We start now the construction of our Vagner-Preston representation 
for block-groups.  
It is easy to show that a block-group can also be defined as a semigroup 
such that every $\relR$-class and every $\relL$-class has at most 
one idempotent.
Clearly, the class of all block-groups comprises all semigroups with 
commuting idempotent  and, in particular, all inverse semigroups.  

\smallskip 

Let $S$ be a semigroup. We denote by $E(S)$ the set of all idempotents 
of $S$ and by $\reg(S)$ the set of all regular elements of $S$. 
Recall the definition of the quasi-orders
$\leq_{\relR}$ and $\leq_{\relL}$ associated to the Green relations
${\relR}$ and ${\relL}$, respectively: for all $s, t\in S$,
$$
s\leq_{\relR}t \mbox{ if and only if }  sS^1\subseteq tS^1
$$
and
$$
s\leq_{\relL}t \mbox{ if and only if }  S^1s\subseteq S^1t,
$$
where $S^1$ denotes the monoid
obtained from $S$ through the adjoining of an identity if $S$ has
none and denotes $S$ otherwise. 

We associate to each element $s\in S$ the 
following two subsets of $E(S)$: 
$$
\relR(s)=\{e\in E(S)\mid e\le_{\relR}s\}\quad\text{and}\quad  
\relL(s)=\{e\in E(S)\mid e\le_{\relL}s\}.
$$

The next two lemmas establish some properties of these sets. 

\begin{lemma} \label{Munn1}  
Let $S$ be a semigroup and $s\in S$. One has:
\begin{description} 
\item (i) If $e\in\relR(s)$ then $es\in \reg(S)$;
\item (ii) If $e\in\relL(s)$ then $se\in \reg(S)$.
\end{description}
\end{lemma}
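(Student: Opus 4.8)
The plan is to prove (i) in full and then obtain (ii) by the left-right dual argument. The guiding observation is that an idempotent lying below $s$ in the $\leq_{\relR}$ order must in fact sit in the same $\relR$-class as $es$; since idempotents are automatically regular, regularity then transfers to $es$. To set this up, I would first unpack the hypothesis of (i): saying $e\in\relR(s)$ means $e\in E(S)$ and $eS^1\subseteq sS^1$, so in particular $e\in sS^1$, i.e. $e=sx$ for some $x\in S^1$. The key step is to feed this back into idempotency: from $e=e^2=e\cdot e$ and $e=sx$ I get $e=e(sx)=(es)x$, so that $e\in esS^1$. Together with the trivial containment $esS^1\subseteq eS^1$ this yields $e\leq_{\relR}es$ and $es\leq_{\relR}e$, hence $e\,\relR\,es$.

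From here there are two natural ways to conclude, and I would favour the self-contained one. Using $e=(es)x$ directly, a short computation gives $es\cdot x\cdot es=(esx)(es)=e\cdot es=(e\cdot e)s=es$, which exhibits $es$ as a regular element with relative inverse $x$. Alternatively, one may simply invoke the standard fact that regularity is constant on a $\relD$-class (hence on an $\relR$-class): $e$ is idempotent, thus regular, and $e\,\relR\,es$ then forces $es\in\reg(S)$. Either route is routine once the membership $e\in esS^1$ has been established, so the whole content of (i) is really that single identity $e=(es)x$.

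For (ii) I would run the mirror image: from $e\in\relL(s)$, i.e. $S^1e\subseteq S^1s$, we get $e=ys$ with $y\in S^1$, and idempotency gives $e=e^2=(ys)e=y(se)$, so $e\leq_{\relL}se$ and therefore $se\,\relL\,e$. The computation $se\cdot y\cdot se=se\cdot(y\,se)=se\cdot e=se$ then shows $se\in\reg(S)$. I expect no genuine difficulty here; the only point needing a little care is the role of the adjoined identity, since the witness $x$ (resp. $y$) a priori lies in $S^1$ rather than in $S$. In the degenerate case $x=1$, where $e=es$, one must separately note that $es=e$ is already idempotent and hence regular in $S$; this minor bookkeeping is the only obstacle, and it is entirely avoided if one instead argues via the $\relD$-class invariance of regularity.
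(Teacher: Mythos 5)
Your proposal is correct and follows essentially the same route as the paper: both extract a witness $x\in S^1$ with $e=sx$, use idempotency of $e$ to obtain the identity $es=(es)x(es)$ (the paper writes it as $es=eees=e(sx)es=(es)x(es)$), and conclude regularity, with (ii) by the dual argument. Your extra observations --- that $e\,\relR\,es$, the alternative appeal to regularity being constant on $\relD$-classes, and the explicit handling of the degenerate case $x=1$ --- are sound refinements of the same computation rather than a different method.
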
 
\begin{proof}
Take $s\in S$ and $e\in\relR(s)$. 
Then, $e=sx$, for some $x\in S^1$, 
whence 
$ 
es=eees=e(sx)es=(es)x(es) 
$  
and so $es\in\reg(S)$. 
Thus, we proved (i). Similarly, we can prove (ii). 
\end{proof}

Given a block-group $S$ we denote by $s^{-1}$ the unique 
inverse of a regular element $s\in S$. 

\begin{lemma} \label{Munn2}  
Let $S$ be a block-group and $s\in S$. One has:  
\begin{description}
\item (i) If $e\in\relR(s)$ then 
$e=(es)\inv{(es)}=s\inv{(es)}=s\inv{(es)}e$, 
$\inv{(es)}(es)\in\relL(s)$ and $\inv{(es)}(es)\,{\relD}\,e$;  
\item (ii) If $e\in\relL(s)$ then $e=\inv{(se)}(se)=\inv{(se)}s=e\inv{(se)}s$,
$(se)\inv{(se)}\in\relR(s)$ and $(se)\inv{(se)}\,{\relD}\,e$.
\end{description}
\end{lemma}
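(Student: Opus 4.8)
I will prove (i); statement (ii) then follows by the obvious left--right dual argument. Fix $e\in\relR(s)$. Since $e$ is idempotent and $e\le_{\relR}s$, I can write $e=sx$ for some $x\in S^1$, and by Lemma~\ref{Munn1}(i) the element $es$ is regular, hence has its unique inverse $\inv{(es)}$; I abbreviate $t=es$. The first thing I would check is that $t\,\relR\,e$: on one hand $t=es\in eS^1$, and on the other $tx=esx=e(sx)=e\cdot e=e$, so $e\in tS^1$ as well. Since $tt^{-1}$ is an idempotent lying in the $\relR$-class of $t$, hence in the $\relR$-class of $e$, the defining property of a block-group (at most one idempotent per $\relR$-class) forces $(es)\inv{(es)}=tt^{-1}=e$, which is the first of the three asserted equalities.

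The main work is the middle equality $e=s\inv{(es)}$, i.e. $st^{-1}=e$; here the absence of commuting idempotents means I cannot argue as in the inverse-semigroup case and must instead manufacture an idempotent and appeal again to uniqueness. From $tt^{-1}=e$ I first get $t^{-1}=t^{-1}(tt^{-1})=t^{-1}e$, hence $st^{-1}=st^{-1}e$ (this already yields the third equality $s\inv{(es)}=s\inv{(es)}e$ once the middle one is known). Writing $g=st^{-1}$, the step I would verify is that $g$ is idempotent, via
\[
g^2=(st^{-1}e)(st^{-1})=st^{-1}(es)t^{-1}=st^{-1}(tt^{-1})=st^{-1}e=g,
\]
using $es=t$ and $tt^{-1}=e$. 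Moreover $eg=e(st^{-1})=(es)t^{-1}=tt^{-1}=e$, while $ge=g$ from the relation above. The two identities $ge=g$ and $eg=e$ say precisely that $g\le_{\relL}e$ and $e\le_{\relL}g$, so $e$ and $g$ are idempotents in a common $\relL$-class; the block-group hypothesis (at most one idempotent per $\relL$-class) then gives $g=e$, that is, $s\inv{(es)}=e$. This passage---deriving idempotency of $s\inv{(es)}$ and then exploiting the \emph{one-sided} uniqueness to collapse $g$ onto $e$---is where I expect the real difficulty to lie, since it is exactly the point at which the proof departs from the classical inverse-semigroup computation.

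It remains to treat $f=\inv{(es)}(es)=t^{-1}t$, and here the arguments are routine. That $f\in\relL(s)$ is immediate: $f$ is idempotent and $f=t^{-1}t=t^{-1}(es)=(t^{-1}e)s\in S^1s$, so $f\le_{\relL}s$. Finally $f\,\relD\,e$ follows from the standard facts that a regular element is $\relR$-related to $tt^{-1}$ and $\relL$-related to $t^{-1}t$: thus $e=tt^{-1}\,\relR\,t\,\relL\,t^{-1}t=f$, whence $e\,\relD\,f$. Assembling these pieces yields all the assertions of (i).
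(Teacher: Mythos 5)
Your proof is correct, and it diverges from the paper's at exactly the point you flagged as the crux: the equality $s\inv{(es)}=e$. The paper handles it constructively: from the witness $x\in S^1$ with $e=sx$ it derives $es=(es)x(es)$, checks that $xesx$ is an inverse of $es$, invokes uniqueness of inverses (the defining property of a block-group) to conclude $\inv{(es)}=xesx$, and then computes $s\inv{(es)}=sxesx=e$ directly. You instead never identify $\inv{(es)}$ explicitly: you show $g=s\inv{(es)}$ is idempotent, that $ge=g$ and $eg=e$ (so $g\,\relL\,e$), and apply the one-idempotent-per-$\relL$-class characterization of block-groups a second time to force $g=e$. The remaining pieces (the first equality via $\relR$-related idempotents, the membership $\inv{(es)}(es)\in\relL(s)$, and the $\relD$-relation via $e\,\relR\,es\,\relL\,\inv{(es)}(es)$) coincide with the paper's argument. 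The trade-off: the paper's route yields the explicit formula $\inv{(es)}=xesx$ as a by-product and uses the uniqueness-of-inverses formulation directly, while yours is witness-free after the first step (the element $x$ disappears entirely once $(es)\inv{(es)}=e$ is established) and uses the Green-class characterization symmetrically, once for $\relR$ and once for $\relL$, which makes the role of the block-group hypothesis arguably more transparent. Both are complete and of comparable length.
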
 
\begin{proof}
We just prove (i). The property (ii) can be proved similarly. 

Let $s\in S$ and $e\in\relR(s)$. Take $x\in S^1$ such that $e=sx$. 
As $es\in\reg(S)$, we can consider the idempotent $es\inv{(es)}$.
Now, since $e=ee=esx=es\inv{(es)}esx$, then 
$e$ and $es\inv{(es)}$ are $\relR$-related idempotents, whence 
$e=es\inv{(es)}$.
On the other hand, we have $es=eees=esxes$. Then $xesx$ is the inverse of 
$es$, whence $\inv{(es)}=xesx$ and so $s\inv{(es)}=sxesx=eee=e$. 
Also, the equality $e=s\inv{(es)}e$ follows immediately.  
Next, it is clear that $\inv{(es)}(es)\in\relL(s)$. Moreover,
since $e=es\inv{(es)}$ and $es=es\inv{(es)}es$,
then $e\,{\relR}\,es\,{\relL}\,\inv{(es)}es$, 
whence $\inv{(es)}(es)\,{\relD}\,e$, as required. 
\end{proof}

\smallskip 

Next, we recall that in \cite{Fernandes:2008SF}, %idpsepdegree 
for a block-group $S$, we showed that the mapping 
$$
\begin{array}{rccccccc}
\delta: & S & \rightarrow & \I(E(S))&&&\\
        & s & \mapsto & \delta_s:&\relR(s)&\rightarrow&\relL(s)\\
         &&&                    & e & \mapsto &\inv{(es)}(es) 
\end{array}
$$
is an idempotent-separating homomorphism. 
Since for an inverse semigroup $S$ this  
homomorphism coincides with the (usual) Munn representation of $S$, 
then also for a block-group $S$ we called to $\delta$ the {\it Munn representation} of $S$. 
Moreover, such as for inverse semigroups, the kernel of the Munn representation of a block-group $S$ 
is the largest idempotent-separating congruence on $S$. 

\smallskip 

Recall also that 
$$
\pv{BG}=\pvid{(x^\omega y^\omega)^\omega=(y^\omega x^\omega)^\omega} = 
\pvid{(x^\omega y^\omega)^\omega x^\omega=(x^\omega y^\omega)^\omega=y^\omega(x^\omega y^\omega)^\omega}
$$ 
(see \cite{Pin:1995}) and, given any semigroup $S$, 
the natural partial order $\le$ of $E(S)$ is defined by 
$e\le f$ if and only if $e=ef=fe$, for all $e,f\in E(S)$. 
Our next result shows that, 
for a semigroup $S\in\pv{BG}$, one of the last two equalities 
suffices to define the natural partial order on $E(S)$ and, like for inverse semigroups, with this order, $E(S)$ forms a semilattice. 

\begin{proposition}\label{idpBG1}
Let $S$ be a block-group and $e,f\in E(S)$. 
Then, we have 
$$ 
e\le f \quad\text{if and only if}\quad e=ef \quad\text{if and only if}\quad e=fe.
$$ 
Moreover, $(E(S),\le)$ is a meet-semilattice, where 
the infimum $e\wedge f$ of $e$ and $f$ is equal to $(ef)^\omega$.
\end{proposition}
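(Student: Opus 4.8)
The plan is to deduce everything from the pseudoidentity characterization $\pv{BG}=\pvid{(x^\omega y^\omega)^\omega x^\omega=(x^\omega y^\omega)^\omega=y^\omega(x^\omega y^\omega)^\omega}$ recalled just above the statement. Since $e,f$ are idempotents, $e^\omega=e$ and $f^\omega=f$, so instantiating this pseudoidentity at $x=e,\,y=f$ gives $(ef)^\omega e=(ef)^\omega=f(ef)^\omega$, while instantiating it at $x=f,\,y=e$ gives $(fe)^\omega f=(fe)^\omega=e(fe)^\omega$. Before exploiting these, I would note that the natural partial order is, for any semigroup, genuinely a partial order on $E(S)$ (reflexivity, antisymmetry and transitivity being routine from the definition $e\le f\iff e=ef=fe$), so what remains is to prove the stated equivalences and the existence of infima.

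For the first assertion, one direction is immediate: $e\le f$ means $e=ef=fe$, hence in particular $e=ef$ and $e=fe$. For the converse, suppose first $e=ef$. Then $(ef)^\omega=e^\omega=e$, and the relation $f(ef)^\omega=(ef)^\omega$ becomes $fe=e$; thus $e=ef=fe$, i.e. $e\le f$. Symmetrically, if $e=fe$, then $(fe)^\omega=e$ and the relation $(fe)^\omega f=(fe)^\omega$ becomes $ef=e$, so again $e\le f$. Hence the three conditions $e\le f$, $e=ef$ and $e=fe$ are equivalent. (Alternatively one can argue straight from the block-group definition: from $e=ef$ one checks that $fe$ is idempotent and $fe\,\relL\,e$, whence $fe=e$ because each $\relL$-class contains at most one idempotent.)

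For the second assertion, set $g=(ef)^\omega\in E(S)$. The instantiation at $x=e,\,y=f$ gives $ge=(ef)^\omega e=(ef)^\omega=g$ and $fg=f(ef)^\omega=(ef)^\omega=g$, so by the first assertion $g\le e$ and $g\le f$, i.e. $g$ is a lower bound of $\{e,f\}$. To see it is the greatest one, let $h\in E(S)$ satisfy $h\le e$ and $h\le f$. Then $he=h$ and $hf=h$, so $h(ef)=(he)f=hf=h$, and by induction $h(ef)^n=h$ for every $n\ge 1$; choosing $n$ with $(ef)^n=g$ yields $hg=h$, whence $h\le g$ by the first assertion. Therefore $g=(ef)^\omega$ is the infimum $e\wedge f$, and since every pair admits an infimum, $(E(S),\le)$ is a meet-semilattice.

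The only genuinely nontrivial point is the converse in the first assertion, namely passing from a single one-sided equality to $e\le f$; this is exactly where finiteness and the block-group hypothesis are used, via the pseudoidentity (equivalently, via the uniqueness of idempotents in $\relR$- and $\relL$-classes). Once the equivalences are in hand, the semilattice claim is a direct verification, the infimum being pinned down to $(ef)^\omega$ by the two one-sided equalities $ge=g=fg$ that the pseudoidentity supplies.
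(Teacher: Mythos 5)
Your proof is correct and follows essentially the same route as the paper: both deduce the equivalence of $e\le f$, $e=ef$, $e=fe$ from the pseudoidentity $(ef)^\omega e=(ef)^\omega=f(ef)^\omega$ (you just make the symmetric instantiation at $x=f$, $y=e$ explicit where the paper says ``similarly''), and both identify $e\wedge f=(ef)^\omega$ by checking it is a lower bound via that same pseudoidentity and then showing any common lower bound $h$ satisfies $h(ef)=h$, hence $h(ef)^\omega=h$. Your explicit induction on $h(ef)^n=h$ and the choice of $n$ with $(ef)^n=(ef)^\omega$ is exactly the step the paper compresses into ``which imply that $g=g(ef)^\omega$.''
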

\begin{proof}
Considering the definition of the partial order $\le$, to prove the first part of the statement, it suffices to show that 
$e=ef$ is equivalent to $e=fe$. So, by admitting that $e=ef$, we have $e=(ef)^\omega= f (ef)^\omega=f e$. 
Similarly, one can prove that $e=fe$ implies $e=ef$. 

Next, we  prove the second part. Since $(ef)^\omega e=(ef)^\omega =f(ef)^\omega$, we have $(ef)^\omega\le e, f$. 
On the other hand, let $g\in E(S)$ be such that $g\le e, f$. Then, 
$g=gf=(ge)f=g(ef)$, which imply that $g=g(ef)^\omega$ and so $g\le(ef)^\omega$. 
Hence,  $e\wedge f$ exists and it is equal to $(ef)^\omega$, as required. 
\end{proof}

\smallskip 

Now, for a semigroup $S$ and for each $s\in S$, consider the 
following two subsets of $S$: 
$$
\relD(s)=\bigcup\{Se\mid e\in\relR(s)\}\quad\text{and}\quad  
\relI(s)=\bigcup\{Se\mid e\in\relL(s)\}.
$$
Notice that $x\in\relD(s)$ (respectively, $x\in\relI(s)$) if and only if $x=xe$, for some $e\in\relR(s)$ (respectively, $e\in\relL(s)$). 

Given $s,t\in S$, it is easy to show that $\relI(s)=\relI(t)$ if and only if $\relL(s)=\relL(t)$. In a block-group, 
as a consequence of Proposition \ref{idpBG1}, we also have: 

\begin{lemma}\label{idpBG3}
Let $S$ be a block-group and $s,t\in S$. Then $\relD(s)=\relD(t)$ if and only if $\relR(s)=\relR(t)$. 
\end{lemma}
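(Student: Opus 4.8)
The plan is to prove the two implications separately, the forward one being essentially a tautology. Indeed, if $\relR(s)=\relR(t)$, then the very definition $\relD(s)=\bigcup\{Se\mid e\in\relR(s)\}$ immediately yields $\relD(s)=\relD(t)$, with no appeal whatsoever to the block-group hypothesis. So all the work lies in the converse.

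For the converse I would assume $\relD(s)=\relD(t)$. Since the statement is symmetric in $s$ and $t$, it suffices to establish the single inclusion $\relR(s)\subseteq\relR(t)$, and then interchange the roles of $s$ and $t$. So I fix $e\in\relR(s)$; in particular $e\in E(S)$, and because $e=ee\in Se$ we get $e\in\relD(s)=\relD(t)$. By the characterisation of $\relD(t)$ recorded just before the lemma (namely $x\in\relD(t)$ iff $x=xf$ for some $f\in\relR(t)$), this membership means precisely that $e=ef$ for some $f\in\relR(t)$.

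At this point the naive attempt to finish fails, and this is where I expect the main obstacle to sit. Writing $f=tx$ with $x\in S^1$ (which is exactly the meaning of $f\le_{\relR}t$) and substituting directly into $e=ef$ only gives $e=(et)x$, exhibiting $e$ as a \emph{right} multiple rather than as the left multiple of $t$ required to conclude $e\le_{\relR}t$. The resolution, and the one place the block-group hypothesis genuinely enters, is Proposition~\ref{idpBG1}: since $e,f\in E(S)$ and $e=ef$, we also have $e=fe$. Combining this with $f=tx$ yields $e=fe=txe=t(xe)\in tS^1$, that is $e\le_{\relR}t$, and hence $e\in\relR(t)$.

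This establishes $\relR(s)\subseteq\relR(t)$; the symmetric argument gives the reverse inclusion, so $\relR(s)=\relR(t)$ and the proof is complete. The only nontrivial ingredient is the passage from $e=ef$ to $e=fe$, i.e.\ the commutation of comparable idempotents, which holds for block-groups precisely by Proposition~\ref{idpBG1}; every remaining step is a routine manipulation of the quasi-order $\le_{\relR}$ and of the definitions of $\relR$ and $\relD$.
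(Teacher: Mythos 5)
Your proof is correct and follows essentially the same route as the paper: both reduce the converse to showing $e\in\relR(s)$ lies in $\relR(t)$, extract $f\in\relR(t)$ with $e=ef$ from $e\in\relD(t)$, and use Proposition~\ref{idpBG1} to flip this to $e=fe$ and conclude $e\in tS^1$. The only (cosmetic) difference is that you write $f=tx$ with $x\in S^1$ directly from the definition of $\le_{\relR}$, whereas the paper uses the explicit witness $f=t(ft)^{-1}$ from Lemma~\ref{Munn2}; both serve the identical purpose.
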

\begin{proof}
It is clear that $\relR(s)=\relR(t)$ implies $\relD(s)=\relD(t)$. Conversely, let us suppose that $\relD(s)=\relD(t)$ and take $e\in\relR(s)$. 
Then $e\in Se\subseteq\relD(s)=\relD(t)$, whence there exists $f\in\relR(t)$ such that $e\in Sf$. 
So, we have $f=t(ft)^{-1}$ and $e=ef$. Now, by Proposition \ref{idpBG1}, we also have $e=fe$, and so 
$e=fe=t(ft)^{-1}e$, from which follows that $e\in\relR(t)$.  Hence $\relR(s)\subseteq\relR(t)$. 
Similarly, we may show that $\relR(t)\subseteq\relR(s)$ and so $\relR(s)=\relR(t)$. 
\end{proof}

Let $S$ be a block-group, $s\in S$ and $e\in\relR(s)$. If $x\in Se$ then $x=xe$ and $xs=xes=x(es)(es)^{-1}(es)=(xs)(es)^{-1}(es)$, whence 
$xs\in S(es)^{-1}(es)$. Since $(es)^{-1}(es)\in\relL(s)$, we may deduce that if $x\in\relD(s)$ then $xs\in\relI(s)$. 
Moreover, we have: 

\begin{proposition} \label{VP3} 
Let $S$ be a block-group and $s\in S$. Then, the mapping 
$$
\begin{array}{rccc}
\phi_s: & \relD(s) & \rightarrow & \relI(s)\\
& x & \mapsto & xs
\end{array} 
$$
is a bijection preserving $\relR$-classes.
\end{proposition}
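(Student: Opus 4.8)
The plan is to verify in turn that $\phi_s$ maps into $\relI(s)$ (already observed in the paragraph preceding the statement), that it is surjective and preserves $\relR$-classes, and finally that it is injective, which I expect to be the real work.

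For the $\relR$-preservation and for surjectivity I would lean on a single \emph{recovery formula}. Given $x\in\relD(s)$, write $x=xe$ with $e\in\relR(s)$; by Lemma~\ref{Munn2}(i) one has $s\inv{(es)}=e$, whence
$$
x=xe=xs\inv{(es)}=(xs)\inv{(es)}.
$$
This exhibits $x\le_{\relR}xs$, and since trivially $xs\le_{\relR}x$, we get $x\,{\relR}\,xs=\phi_s(x)$; consequently $x\,{\relR}\,y$ if and only if $xs\,{\relR}\,ys$ for $x,y\in\relD(s)$, which is the asserted preservation of $\relR$-classes. For surjectivity, take $w\in\relI(s)$, so $w=wg$ for some $g\in\relL(s)$, and set $x=w\inv{(sg)}$. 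Using Lemma~\ref{Munn2}(ii), namely $\inv{(sg)}s=g$ and $(sg)\inv{(sg)}\in\relR(s)$, one checks that $x=x\,(sg)\inv{(sg)}\in\relD(s)$ and $\phi_s(x)=w\inv{(sg)}s=wg=w$, so $\phi_s$ is onto.

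The hard part will be injectivity, and the obstacle is precisely that the idempotent $e\in\relR(s)$ with $x=xe$ is \emph{not} unique, so the recovery formula $x=(xs)\inv{(es)}$ is not manifestly a function of $xs$ alone. To get around this I would transport the problem to the $\relL$-side, where the semilattice structure of Proposition~\ref{idpBG1} is available. Suppose $x,y\in\relD(s)$ with $xs=ys=:w$; write $x=xe$, $y=yf$ with $e,f\in\relR(s)$ and put $g=\inv{(es)}(es)$ and $g'=\inv{(fs)}(fs)$, which lie in $\relL(s)$. A short computation using Lemma~\ref{Munn2}(i) gives $sg=es$, so that $\inv{(sg)}=\inv{(es)}$; hence $x=w\inv{(sg)}$ and $wg=(xs)g=x(sg)=x(es)=xs=w$, and symmetrically $y=w\inv{(sg')}$ with $wg'=w$.

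The crux is then the auxiliary identity: for idempotents $h\le g$ in $\relL(s)$ one has $\inv{(sh)}=h\inv{(sg)}$. I would prove this by checking, with $hg=gh=h$ from Proposition~\ref{idpBG1} and $\inv{(sg)}s=g$ from Lemma~\ref{Munn2}(ii), that $h\inv{(sg)}$ satisfies both inverse equations for the regular element $sh$, and then invoking the uniqueness of inverses in a block-group. Granting this, set $h=g\wedge g'=(gg')^\omega\in\relL(s)$; since $wg=wg'=w$ we get $wh=w$, and the identity yields
$$
w\inv{(sh)}=wh\inv{(sg)}=w\inv{(sg)}=x \quad\text{and}\quad w\inv{(sh)}=wh\inv{(sg')}=w\inv{(sg')}=y,
$$
so $x=y$ and $\phi_s$ is injective. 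The main obstacle throughout is this non-uniqueness on the $\relR$-side; everything else is direct manipulation built on Lemmas~\ref{Munn1}--\ref{Munn2} and Proposition~\ref{idpBG1}.
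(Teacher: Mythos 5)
Your proof is correct, and while your treatment of $\relR$-preservation and surjectivity coincides with the paper's (same recovery formula $x=(xs)\inv{(es)}$, same preimage $w\inv{(sg)}$), your injectivity argument takes a genuinely different route. The paper stays entirely on the $\relR$-side: from $xs=ys$ it derives $x=ye$ and $y=xf$, hence $x=x(fe)^\omega$ and $y(ef)=y$, and then concludes $x=x(fe)^\omega=y(fe)^\omega=y(ef)^\omega=y$ by invoking the defining pseudoidentity $(fe)^\omega=(ef)^\omega$ of $\pv{BG}$ directly on the idempotents $e,f\in\relR(s)$ --- a three-line computation. You instead transport the data to the $\relL$-side via $g=\inv{(es)}(es)$, $g'=\inv{(fs)}(fs)$, use the semilattice structure of Proposition~\ref{idpBG1} to form $h=g\wedge g'=(gg')^\omega$, and prove the auxiliary identity $\inv{(sh)}=h\inv{(sg)}$ for $h\le g$ in $\relL(s)$ by checking both inverse equations and invoking uniqueness of inverses; both $x$ and $y$ then equal $w\inv{(sh)}$. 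Your verification of that identity is sound ($shh\inv{(sg)}sh = sh(\inv{(sg)}s)h = shgh = sh$ and dually, using $\inv{(sg)}s=g$ and $hg=h$), and the minor unstated point that $h\in\relL(s)$ follows since $h\le_{\relL}g'\le_{\relL}s$. What each approach buys: the paper's computation is shorter and needs nothing beyond the pseudoidentity; yours is longer but more structural --- the restriction identity $\inv{(sh)}=h\inv{(sg)}$ is a reusable fact about how unique inverses interact with the natural partial order, exactly the mechanism that makes restrictions of partial bijections compose in the inverse-semigroup setting, so it illuminates \emph{why} $\phi_s$ is injective rather than just verifying it.
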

\begin{proof} Let $s\in S$. 

Clearly, $\phi_s$ preserves $\relR$-classes. 
In fact, let $x\in\relD(s)$ and $e\in\relR(s)$ be such that $x=xe$. 
Then $xs=xs$ and $x=xe=xs(es)^{-1}$, whence $x\relR xs$. 

In order to prove that $\phi_s$ is injective, let $x,y\in\relD(s)$ be such that $xs=ys$. 
Let $e,f\in\relR(s)$ be such that $x=xe$ and $y=ye$. 
Since $xs=ys$, we have $xs(es)^{-1}=ys(es)^{-1}$ and so $x=xe=ye$. Similarly, we have $y=xf$. 
Consequently, 
$$
x=ye=xfe=\cdots=x(fe)^\omega=xf(fe)^\omega=y(fe)^\omega=y(ef)^\omega=\cdots=yef=xf=y.
$$
Hence, $\phi_s$ is injective. 

Lastly, we prove that $\phi_s$ is surjective. Let $y\in\relI(s)$. Then, there exists $f\in\relL(s)$ such that $y=yf$. 
Take $x=y(sf)^{-1}$ and $e=(sf)(sf)^{-1}$. Then $e\in\relR(s)$ and $xe=y(sf)^{-1}(sf)(sf)^{-1}=y(sf)^{-1}=x$, whence $x\in\relD(s)$. 
Moreover, $x\phi_s=xs=y(sf)^{-1}s=yf=y$. Thus, $\phi_s$ is surjective, as required. 
\end{proof}

Now, we may present our main result. 

\begin{theorem} \label{VP4}  
Let $S$ be a block-group. 
Then, the mapping 
$$
\begin{array}{rccc}
\phi: & S & \rightarrow & \I(S) \\
& s & \mapsto & \phi_s
\end{array} 
$$
is a homomorphism of semigroups such that: 
\begin{enumerate}
\item $\phi$ is injective in $\reg(S)$, i.e. the kernel of $\phi$ separates regular elements;
\item If $s\in\reg(S)$ then $\relD(s)=Sss^{-1}$ and $\relI(s)=Ss^{-1}s$. 
In particular, if $S$ is an inverse semigroup then $\phi$ is the usual Vagner-Preston representation of $S$. 
\end{enumerate}
Furthermore, the kernel of $\phi$ is the largest congruence on $S$ that separates regular elements. 
\end{theorem}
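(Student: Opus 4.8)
The plan is to show successively that $\phi$ is a homomorphism into $\I(S)$, that statements (2) and (1) hold, and that $\ker\phi$ is maximal among regular-separating congruences. Since each $\phi_s$ is a bijection from $\relD(s)$ onto $\relI(s)$ by Proposition \ref{VP3}, we have $\phi_s\in\I(S)$, so $\phi$ is a well-defined map into $\I(S)$. To prove it is a homomorphism I would fix $s,t\in S$ and check $\phi_s\phi_t=\phi_{st}$; as both sides send $x$ to $x(st)=(xs)t$ wherever defined, everything reduces to the domain identity $\{x\in\relD(s)\mid xs\in\relD(t)\}=\relD(st)$. Proposition \ref{VP3} settles this in both directions. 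For $\subseteq$, if $x\in\relD(s)$ and $xs\in\relD(t)$ then $x\relR xs$ and $xs\relR(xs)t=xst$, so $x\relR xst$; writing $x=(xst)u$ with $u\in S^1$ and passing to the idempotent power $g=(stu)^\omega$ gives $x=xg$ with $g\le_{\relR}st$, so $g\in\relR(st)$ and $x\in\relD(st)$. For $\supseteq$, if $x=xg_0$ with $g_0\in\relR(st)$, then $g_0\le_{\relR}st\le_{\relR}s$ gives $g_0\in\relR(s)$ and $x\in\relD(s)$, hence $x\relR xs$; combined with $x\relR xst$ (Proposition \ref{VP3} for $st$) this yields $xs\relR(xs)t$, and the same $\omega$-power device produces $f\in\relR(t)$ with $xs=(xs)f$, i.e.\ $xs\in\relD(t)$.

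For the itemized claims I would treat (2) first. If $s\in\reg(S)$, then $ss^{-1}\in\relR(s)$, and every $e\in\relR(s)$ satisfies $e\le_{\relR}s\,\relR\,ss^{-1}$, so $e=e\,ss^{-1}$ by Proposition \ref{idpBG1}; hence $\relD(s)=Sss^{-1}$ and, dually, $\relI(s)=Ss^{-1}s$, making $\phi_s$ the usual Vagner--Preston chart, and for an inverse semigroup (where every element is regular) $\phi$ is the classical representation. Statement (1) follows: if $s,t\in\reg(S)$ with $\phi_s=\phi_t$, equality of domains gives $Sss^{-1}=Stt^{-1}$, whence $ss^{-1}=tt^{-1}$ (idempotents with the same left ideal coincide, by Proposition \ref{idpBG1}); evaluating the two equal maps at the point $ss^{-1}=tt^{-1}$ of their common domain then gives $s=ss^{-1}s=ss^{-1}t=tt^{-1}t=t$.

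For maximality, let $\rho$ be any congruence separating regular elements. As idempotents are regular, $\rho$ separates idempotents, so $\rho\subseteq\ker\delta$, the largest idempotent-separating congruence recalled above; thus $(s,t)\in\rho$ forces $\delta_s=\delta_t$, hence $\relR(s)=\relR(t)$ and, by Lemma \ref{idpBG3}, $\relD(s)=\relD(t)$. To compare the actions I take $x\in\relD(s)=\relD(t)$ and write $x=xe$ with $e\in\relR(s)=\relR(t)$; then $es$ and $et$ are both regular by Lemma \ref{Munn1}, and $(es,et)\in\rho$ forces $es=et$, so $xs=x(es)=x(et)=xt$. Hence $\phi_s=\phi_t$, i.e.\ $\rho\subseteq\ker\phi$, which together with (1) shows $\ker\phi$ is the largest congruence separating regular elements.

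I expect the homomorphism step, and precisely the domain identity $\{x\in\relD(s)\mid xs\in\relD(t)\}=\relD(st)$, to be the main obstacle; the trick that overcomes it is to feed the $\relR$-class preservation of Proposition \ref{VP3} into a passage to idempotent powers, thereby manufacturing the needed idempotents of $\relR(st)$ and $\relR(t)$ from mere $\relR$-relations.
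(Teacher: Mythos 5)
Your proposal is correct, but it departs from the paper's proof at three points, each time in a genuinely different way. For the homomorphism step, the paper manufactures the required idempotents explicitly from inverses: it writes $e=s\inv{(es)}$, $f=t\inv{(ft)}$ and takes $g=(st\inv{(ft)}\inv{(es)})^\omega$ for one inclusion of domains, and $f=(t\inv{(gst)}s)^\omega$ for the other; you instead feed the $\relR$-class preservation of Proposition \ref{VP3} into a generic factor $u\in S^1$ with $x=xstu$ and take $g=(stu)^\omega$ (and similarly $f=(tv)^\omega$). Both devices are sound; yours is cleaner in that it never touches inverses, while the paper's gives concrete formulas for the idempotents. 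For injectivity on $\reg(S)$, the paper derives $s=s\inv{s}t$ and $t=t\inv{t}s$ and then needs the block-group pseudoidentity $(x^\omega y^\omega)^\omega=(y^\omega x^\omega)^\omega$ to close the computation, whereas you first deduce $s\inv{s}=t\inv{t}$ from $Ss\inv{s}=St\inv{t}$ via Proposition \ref{idpBG1} (two idempotents generating the same left ideal are comparable both ways, hence equal), after which $s=t$ drops out in one line; this is genuinely simpler and avoids the pseudoidentity altogether. For maximality of $\ker(\phi)$, the paper proves $\relR(s)=\relR(t)$ from scratch (from $e=s\inv{(es)}$ and $s\,\rho\,t$ one gets $e\,\rho\,(t\inv{(es)})^\omega$, and $\rho$-related regular elements are equal), while you outsource exactly this step to the recalled fact that $\ker(\delta)$ is the largest idempotent-separating congruence, combined with Lemma \ref{idpBG3}; this is legitimate since the paper explicitly recalls that fact, but it makes your argument lean on the Munn representation where the paper's is self-contained. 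The remaining pieces --- part (2), and the final comparison of actions ($es=et$, hence $xs=xt$) --- coincide with the paper's.
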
 
\begin{proof}
First of all, we prove that $\phi$ is a homomorphism of semigroups.  

Let $s,t\in S$. For $x\in\dom(\phi_s\phi_t)\cap\dom(\phi_{st})$, we have 
$x\phi_s\phi_t=(xs)\phi_t=xst=x\phi_{st}$. Hence, in order to prove that $\phi_s\phi_t=\phi_{st}$, 
it suffices to show that $\dom(\phi_s\phi_t)=\dom(\phi_{st})$. 
With this goal in mind, take $x\in\dom(\phi_s\phi_t)$, i.e. $x\in\dom(\phi_s)=\relD(s)$ and $xs=x\phi_s\in\dom(\phi_t)=\relD(t)$. 
Hence, there exist $e\in\relR(s)$ and $f\in\relR(t)$ such that $x=xe$ and $xs=xsf$. 
Observe that $e=s(es)^{-1}$ and $f=t(ft)^{-1}$. 
Let $g=(st(ft)^{-1}(es)^{-1})^\omega$. 
Then, clearly, $g\in\relR(st)$ and, on the other hand, 
$$
x=xe=xs(es)^{-1}=xsf(es)^{-1}=xst(ft)^{-1}(es)^{-1}=\cdots=x(st(ft)^{-1}(es)^{-1})^\omega=xg,
$$
whence $x\in\relD(st)=\dom(\phi_{st})$. 
Conversely, take $x\in\dom(\phi_{st})=\relD(st)$. 
Let $g\in\relR(st)$ be such that $x=xg$. Then $g=st(gst)^{-1}$. Clearly, we also have $g\in\relR(s)$, whence $x\in\relD(s)=\dom(\phi_s)$. 
On the other hand, take $f=(t(gst)^{-1}s)^\omega$. 
Then, $f\in\relR(t)$ and 
$$
xs=xgs=xst(gst)^{-1}s=\cdots=xs(t(gst)^{-1}s)^\omega=xsf,
$$ 
whence $x\phi_s=xs\in\relD(t)=\dom(\phi_t)$ and so $x\in\dom(\phi_s\phi_t)$. Thus, $\dom(\phi_s\phi_t)=\dom(\phi_{st})$.

Therefore, we showed that $\phi$ is a homomorphism of semigroups.  

Next, we consider an element $s\in\reg(S)$ and prove that $\relD(s)=Sss^{-1}$ and $\relI(s)=Ss^{-1}s$. 

First, we prove that $\relD(s)=Sss^{-1}$. Take $x\in\relD(s)$. Then, there exists $e\in\relR(s)$ such that $x=xe$. 
Then $ss^{-1}e=ss^{-1}s(es)^{-1}=s(es)^{-1}=e$ and so, by Proposition \ref{idpBG1}, we also have $e=ess^{-1}$. 
Hence, $x=xe=xess^{-1}\in Sss^{-1}$ and so $\relD(s)\subseteq Sss^{-1}$. 
Conversely, since $ss^{-1}\in\relR(s)$, by definition, we have $Sss^{-1}\subseteq \relD(s)$, whence $\relD(s)=Sss^{-1}$. 

Now, we show that $\relI(s)=Ss^{-1}s$. Since $s^{-1}s\in\relL(s)$, it follows immediately that $Ss^{-1}s\subseteq \relI(s)$. 
Conversely, take $x\in\relI(s)$. Then, there exists $e\in\relL(s)$ such that $x=xe$. 
Observe that $e=(se)^{-1}s$. 
Then $x=xe=x(se)^{-1}s=x(se)^{-1}ss^{-1}s\in Ss^{-1}s$, 
whence $\relD(s)\subseteq Ss^{-1}s$ and so $\relI(s)=Ss^{-1}s$. 

Our next step is to show that $\phi$ is injective in $\reg(S)$. Let $s,t\in\reg(S)$ be such that $\phi_s=\phi_t$. 
Then, in particular, 
$Sss^{-1}=\dom(\phi_s)=\dom(\phi_t)=Stt^{-1}$, whence $ss^{-1},tt^{-1}\in\dom(\phi_s)=\dom(\phi_t)$ and so 
$s=(ss^{-1})\phi_s=(ss^{-1})\phi_t=ss^{-1}t$ and, similarly, $t=tt^{-1}s$. 
Now, from $s=ss^{-1}t$ we get $ts^{-1}=ts^{-1}ss^{-1}=ts^{-1}ss^{-1}ts^{-1}=ts^{-1}ts^{-1}$ and 
from $t=tt^{-1}s$ we get $ts^{-1}=tt^{-1}ss^{-1}$. Then $tt^{-1}ss^{-1}$ is an idempotent and so $(tt^{-1}ss^{-1})^\omega=tt^{-1}ss^{-1}$. 
Similarly $(ss^{-1}tt^{-1})^\omega=ss^{-1}tt^{-1}$, 
whence 
$$
s=ss^{-1}t=ss^{-1}tt^{-1}t=(ss^{-1}tt^{-1})^\omega t= (tt^{-1}ss^{-1})^\omega t=tt^{-1}ss^{-1}t=tt^{-1}s=t. 
$$

Finally, in order to prove that $\ker(\phi)$ is the largest congruence on $S$ that separates regular elements, let $\rho$ be a congruence on $S$ that separates regular elements. 

Let $s,t\in S$ be such that $s\rho\,t$. 

Let us show that $\phi_s$ and $\phi_t$ have the same domain, i.e. $\relD(s)=\relD(t)$. As observed before, it suffices to show that $\relR(s)=\relR(t)$. 
With this objective in mind, take $e\in\relR(s)$. Then, we have $e=s(es)^{-1}$ and so $s\rho\,t$ implies $e=s(es)^{-1}\rho\,t(es)^{-1}$, whence  
$e\rho\,(t(es)^{-1})^\omega$, from which follows $e=(t(es)^{-1})^\omega$, as $e$ and $(t(es)^{-1})^\omega$ are regular elements, and so $e\in\relR(t)$. 
Hence $\relR(s)\subseteq\relR(t)$. 
Similarly, we may show that $\relR(t)\subseteq\relR(s)$ and so $\relR(s)=\relR(t)$. 

Now, take $x\in\relD(s)$. Then, $x=xe$, for some $e\in\relR(s)$. As $\relR(s)=\relR(t)$, we have $es, et\in\reg(S)$ and, since $s\rho\,t$ implies $es\rho\,et$, we deduce that $es=et$. Hence, $x\phi_s=xs=xes=xet=xt=x\phi_t$, which allows us to conclude that $\phi_s=\phi_t$ and so $(s,t)\in\ker(\phi)$. 

Thus $\rho\subseteq\ker(\phi)$, as required.
\end{proof} 

\smallskip 

Let $S$ be a block-group. Like for inverse semigroups, we call to the homomorphism $\phi:S\rightarrow\I(S)$ defined in Theorem \ref{VP4} the \textit{Vagner-Preston representation} of $S$. 

\smallskip 

Recall that in \cite{Fernandes:2008} we proved that $\pv{BG}=\pv{N}\malcev\pv{Ecom}$. 
As an application of Theorem \ref{VP4}, we may now give a more direct and simpler proof 
of the inclusion $\pv{BG}\subseteq\pv{N}\malcev\pv{Ecom}$ than the one we did in the aforementioned paper. 

First, we recall that, given a pseudovariety of semigroups $\pv V$, 
a semigroup $S$ is called a $\pv V$-{\it extension} of a semigroup $T$ 
if there exists an onto homomorphism $\varphi:S\longrightarrow T$ such that, 
for every idempotent $e$ of $T$, 
the subsemigroup $e\varphi^{-1}$ of $S$ belongs to $\pv V$. 
In addition, being $\pv W$ another pseudovariety of semigroups,  
the {\it Mal'cev product} $\pv V\malcev\pv W$ is the pseudovariety 
of semigroups generated by all $\pv V$-extensions of elements of $\pv W$. 

Now, like the inclusion $\pv{BG}\subseteq\pv{EI}\malcev\pv{Ecom}$ is an immediate consequence of the construction of the Munn representation for block-groups \cite{Fernandes:2008SF} (recall that $\pv{EI}=\pvid{x^\omega =y^\omega}$ and the Munn representation of a block-group is an idempotent-separating homomorphism), in its turn, the inclusion $\pv{BG}\subseteq\pv{N}\malcev\pv{Ecom}$ is an immediate consequence of the construction of the Vagner-Preston representation for block-groups. In fact, suppose that $S\in\pv{BG}$ and let $\phi:S\rightarrow\I(S)$ be the Vagner-Preston representation. 
Take an idempotent $\epsilon\in\im(\phi)$. Then $\epsilon\phi^{-1}$ is a subsemigroup of $S$ with only one regular element and so $\epsilon\phi^{-1}\in\pv{N}=\pvid{x^\omega=0}$ (indeed, it is easy to check that a finite semigroup has a unique regular element if and only if it is nilpotent). 
% -->: x regular implies xx' idempotent so xx´=0 then x=xx'x=0x=0 
% <--: x^\omega and xx^\omega are group-elements, then regular, whence x^\omega=xx^\omega = x^\omega x, and so x^\omega=0
As $\im(\phi)$ is a subsemigroup of an inverse semigroup, we have $\im(\phi)\in\pv{Ecom}$, whence $S\in\pv{N}\malcev\pv{Ecom}$, as required. 

%\color{black!50!}

%\nocite{*}

%%%%%%%%%%%%%%%%%%%%% END OF REFERENCES %%%%%%%%%%%%%%%%%%%%%
\lastpage 
\end{document}